\title{$H^2$-regularity on convex domains for Robin eigenfunctions with parameter of arbitrary sign}
\author{Pier Domenico Lamberti\footnote{Università degli Studi di Padova\,, Dipartimento di Tecnica e Gestione dei Sistemi Industriali (DTG)\,, Stradella S. Nicola 3\,, 36100 Vicenza\,, Italy. Email:\, pierdomenico.lamberti@unipd.it}   and Luigi Provenzano\footnote{Sapienza Universit\`a di Roma\,, Dipartimento di Scienze di Base e Applicate per l'Ingegneria\,, Via Scarpa 16\,, 00161 Roma\,, Italy. Email:\, luigi.provenzano@uniroma1.it}}
\date{\today}
\newtheorem{defi}{Definition} 
\newtheorem{thm}[defi]{Theorem}
\newtheorem{rem}[defi]{Remark}
\newtheorem{lemme}[defi]{Lemma}
\newtheorem{cor}[defi]{Corollary}
\newcommand{\matrice}{\begin{pmatrix}}
\newcommand{\ok}{\end{pmatrix}}
\newcommand{\R}{\mathbb R}
\newcommand{\N}{\mathbb N}
\begin{document}

\maketitle

\noindent
{\bf Abstract.} We prove that the Robin eigenfunctions on convex domains of $\mathbb R^n$ are $H^2$ regular regardless of the sign of the parameter involved in the boundary conditions. The proof is an adaptation of a classical argument used in the case of positive parameters combined with a Rellich-Pohozaev identity. 
\vspace{11pt}

\noindent
{\bf Keywords:} Robin eigenfunctions, convex domain, $H^2$-regularity.

\vspace{6pt}
\noindent
{\bf 2020 Mathematics Subject Classification:} 35B65, 35P15, 35P05.

\section{Introduction}
Let $\Omega$ be a bounded domain of $\mathbb R^n$, $n\geq 2$ (i.e., a bounded connected open set) with Lipschitz boundary, and $\beta\in\mathbb R$.
In this paper we consider the following eigenvalue problem:
\begin{equation}\label{robin_class}
\begin{cases}
-\Delta u=\lambda u\,, &  {\rm in\ }\Omega\,,\\
\partial_{\nu}u+\beta u=0\,, & {\rm on\ }\partial\Omega.
\end{cases}
\end{equation}
Here $\nu$ denotes the unit outer normal to $\partial\Omega$ and $\partial_{\nu}u$ the normal derivative of $u$. Problem \eqref{robin_class} is known as the {\it Robin problem} with parameter $\beta$. For $\beta=0$ we have the Neumann eigenvalue problem, while for $\beta\to+\infty$ we recover the Dirichlet eigenvalue problem. In the literature $\beta$ is usually assumed to be positive, but in this paper we do not impose any restriction on its sign. If the domain is not regular enough, problem \eqref{robin_class} has to be understood in the weak sense as follows:
\begin{equation}\label{weak_robin}
\int_{\Omega}\nabla u\cdot\nabla\phi+\beta\int_{\partial\Omega}u\phi=\lambda\int_{\Omega}u\phi\,,\ \ \ \forall\phi\in H^1(\Omega),
\end{equation}
where the unknown $u$ belongs to the  Sobolev Space $H^1(\Omega)$.  By  $H^m(\Omega)$ we denote the usual Sobolev space of functions in $L^2(\Omega)$ with weak derivatives up to order $m$ in $L^2(\Omega)$.

\medskip

In this paper we prove the following

\begin{thm}\label{mainthm}
Let $\Omega$ be a bounded convex domain in $\mathbb R^n$ and  $u\in H^1(\Omega)$ be a solution of \eqref{weak_robin}. Then $u\in H^2(\Omega)$.
\end{thm}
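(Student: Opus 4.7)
For $\beta \geq 0$ the theorem is classical and rests on approximating $\Omega$ from inside by smooth convex domains $\Omega_m$ and applying Grisvard's second-order identity on each $\Omega_m$: the boundary contributions then have the right sign thanks to convexity together with the positivity of $\beta$, and can be absorbed to produce a uniform $H^2(\Omega_m)$-estimate for the approximating solutions. The difficulty when $\beta < 0$ is that one of the boundary contributions arising from substituting $\partial_\nu u_m = -\beta u_m$ into Grisvard's identity carries the \emph{wrong} sign. My plan is to resolve this obstruction by combining Grisvard's identity with a Rellich-Pohozaev identity which, again thanks to convexity, provides an \emph{a priori} bound on $\|\nabla u_m\|_{L^2(\partial \Omega_m)}^2$ in terms of interior quantities.

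I would proceed as follows. Place the origin inside $\Omega$ and approximate $\Omega$ from the inside by smooth convex domains $\Omega_m \nearrow \Omega$; by convexity one has $x \cdot \nu \geq c_0 > 0$ uniformly on $\partial \Omega_m$. For each $m$, let $u_m$ be the unique $H^2(\Omega_m)$-solution of the auxiliary inhomogeneous Robin problem
\begin{equation*}
-\Delta u_m = \lambda u \ \text{ in } \Omega_m, \qquad \partial_\nu u_m + \beta u_m = 0 \ \text{ on } \partial \Omega_m,
\end{equation*}
whose existence follows from standard elliptic theory on smooth domains (after a harmless spectral shift in $\lambda$, if necessary). A routine energy estimate shows that, once suitably extended, $u_m$ converges to $u$ in $H^1(\Omega)$.

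The core of the proof is a uniform bound on $\|u_m\|_{H^2(\Omega_m)}$. On the smooth convex $\Omega_m$, Grisvard's identity gives
\begin{equation*}
\int_{\Omega_m} |D^2 u_m|^2\, dx = \int_{\Omega_m} |\Delta u_m|^2\, dx + \int_{\partial\Omega_m} Q_m[u_m]\, d\sigma,
\end{equation*}
where $Q_m[u_m]$ is a quadratic boundary form involving the second fundamental form of $\partial \Omega_m$ (which is nonnegative by convexity) together with cross-terms. After inserting $\partial_\nu u_m = -\beta u_m$, the latter produce, for $\beta<0$, a bad contribution majorized by $\|\nabla u_m\|_{L^2(\partial \Omega_m)}^2$. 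To control this residual boundary norm I would apply the Rellich-Pohozaev identity to the equation $-\Delta u_m = \lambda u$:
\begin{equation*}
\int_{\partial\Omega_m}(x\cdot \nu)|\nabla u_m|^2\, d\sigma = 2\int_{\partial\Omega_m}(x\cdot \nabla u_m)\,\partial_\nu u_m\, d\sigma + (2-n)\int_{\Omega_m}|\nabla u_m|^2\,dx + 2\lambda \int_{\Omega_m}u\,(x\cdot \nabla u_m)\, dx.
\end{equation*}
Since $x\cdot \nu \geq c_0$ uniformly on $\partial \Omega_m$, and since the right-hand side is controlled, via the Robin condition and Young's inequality, by $\|u_m\|_{H^1(\Omega_m)}^2 + \|u\|_{L^2(\Omega)}^2$ plus an arbitrarily small multiple of $\|D^2 u_m\|_{L^2(\Omega_m)}^2$, this yields the desired control of $\|\nabla u_m\|_{L^2(\partial \Omega_m)}^2$. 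Feeding it back into Grisvard's identity closes the estimate and produces a bound $\|u_m\|_{H^2(\Omega_m)} \leq C$ independent of $m$.

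Once the uniform bound is in hand, weak compactness in $H^2$ and identification of the limit with $u$ via the weak formulation \eqref{weak_robin} will give $u \in H^2(\Omega)$. The main obstacle I anticipate is uniformity in $m$: the principal curvatures of the smooth approximating boundaries $\partial \Omega_m$ may blow up as $m \to \infty$. The decisive point, however, is that convexity forces these curvatures to be \emph{nonnegative}, so they enter Grisvard's identity through favourable terms that do not need any uniform upper bound, and they enter the Rellich-Pohozaev identity only through the uniform lower bound $x \cdot \nu \geq c_0$, which is stable under the approximation.
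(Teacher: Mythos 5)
Your core a priori estimate is the same as the paper's: on each smooth convex approximating domain you combine the Grisvard/Reilly second-order identity (whose curvature terms are favourable by convexity) with a Rellich--Poho\v{z}aev identity, and you use the convexity bound $x\cdot\nu\geq c_0>0$ to convert the Poho\v{z}aev boundary term into control of the tangential gradient on the boundary, which is exactly the term that carries the wrong sign when $\beta<0$. Where you genuinely diverge is in the approximation scheme. The paper approximates $\Omega$ from the \emph{outside} by smooth convex domains and works with the Robin \emph{eigenvalue problems} on the $\Omega_k$; this forces them to prove a full spectral stability theorem (convergence of eigenvalues and of an orthonormal basis of eigenfunctions, Theorem~\ref{stability_easy}), after which every solution of \eqref{weak_robin} is a finite linear combination of the limiting basis eigenfunctions. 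You instead approximate from the \emph{inside} and solve a fixed inhomogeneous auxiliary problem $-\Delta u_m=\lambda u$ with the Robin condition on $\partial\Omega_m$. This is closer to Grisvard's original scheme and, if carried out, is arguably lighter: you only need convergence of solutions of a single coercive boundary value problem under domain perturbation, not convergence of the whole spectrum, and you treat an arbitrary solution $u$ directly rather than through an eigenbasis.

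Two points in your plan are under-justified and deserve to be made explicit, since they are precisely where the paper spends most of its effort. First, the ``routine energy estimate'' giving $u_m\to u$ in $H^1$ is not routine for Robin conditions on moving boundaries: to pass to the limit in the weak formulation you must show that $\int_{\partial\Omega_m}v_m\phi\to\int_{\partial\Omega}v\phi$ for suitably convergent $v_m$ (the analogue of Lemma~\ref{approximation_lem} for inner approximation), and for $\beta<0$ you need the auxiliary problems to be coercive \emph{uniformly in $m$}, which requires a trace inequality $\int_{\partial\Omega_m}v^2\leq\varepsilon\int_{\Omega_m}|\nabla v|^2+C(\varepsilon)\int_{\Omega_m}v^2$ with constants independent of $m$ (Lemma~\ref{traceC}, valid because the $\Omega_m$ lie in a common Lipschitz atlas class). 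Second, your Rellich--Poho\v{z}aev right-hand side still contains $2\int_{\partial\Omega_m}(x\cdot\nabla u_m)\partial_\nu u_m$, which after inserting $\partial_\nu u_m=-\beta u_m$ produces a cross term in the tangential gradient itself; one must absorb it by Young's inequality against the left-hand side (the paper does this by solving a quadratic inequality in $\|\nabla_{\partial\Omega}u\|_{L^2(\partial\Omega)}$), and this in turn requires a uniform bound on $\int_{\partial\Omega_m}u_m^2$, again from the uniform trace inequality. Neither issue breaks your argument, but both need to be addressed for the proof to close.
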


Theorem \ref{mainthm} is well-known for $\beta\geq 0$ and $\beta=+\infty$ and the proof can be found, e.g., in the classical book by Grisvard \cite[Theorem 3.2.3.1]{grisvard}.  The proof in  \cite[\S 3]{grisvard} is based on the approximation of $\Omega$ by means of a sequence of smooth convex domains where the $H^2$-regularity is known by classical regularity theory, and on uniform estimates of the $H^2$-norms of the eigenfunctions on the approximating domains.  

\medskip

The condition $\beta\geq 0$ is used in a substantial way in  \cite[Theorem 3.2.3.1]{grisvard} and relaxing it is not straightforward. The same obstruction appears in the analysis of the Steklov problem and has been recently removed in \cite{lampro_steklov}. In this paper we adapt the method of \cite{lampro_steklov} to the Robin problem with $\beta<0$. Although our contribution seems to be new for $\beta<0$, in this paper we do not impose any restriction on the sign of $\beta$ for the sake of completeness. As in \cite{lampro_steklov} our proof is based on Reilly's formula combined with the Rellich-Pohozaev identity. Moreover, in order to pass to the limit in the approximation procedure, we need a spectral stability result for the Robin problem. Again,  the spectral stability is well-known for $\beta\geq 0$ (see e.g., \cite{dada97}), and here we provide a proof also for $\beta<0$ in the case under consideration, see Theorem \ref{stability_easy}.

\medskip

This paper is organized as follows.  Section~\ref{sec:pre} is devoted to preliminaries and notation. In Section~\ref{sec:estimates} we prove the $H^2$-estimates  on smooth convex domains. In Section~\ref{approximation}
we prove Theorem \ref{stability_easy} theorem concerning the spectral stability of the Robin problem and Theorem \ref{mainthm}. 

\section{Preliminaries and notation}\label{sec:pre}

It is well-known that problem \eqref{weak_robin} can be considered as an eigenvalue problem for a semi-bounded operator $T$ with compact resolvent. In fact, problem \eqref{weak_robin} is equivalent to
\begin{equation}\label{weak_robin2}
\int_{\Omega}\nabla u\cdot\nabla\phi+\beta\int_{\partial\Omega}u\phi+C\int_{\Omega}u\phi=\Lambda\int_{\Omega}u\phi\,,\ \ \ \forall\phi\in H^1(\Omega),
\end{equation}
where $C$ is any real number and $\Lambda=\lambda+C$. By Corollary \ref{coercivity} it follows that there exists a positive constant $C$ such that the quadratic form at the left-hand side of \eqref{weak_robin2} is coercive in $H^1(\Omega)$. Thus, by standard Spectral Theory there exists a self-adjoint operator $T$ such that the quadratic form associated with $T+CI$ is precisely the one at the left-hand side of \eqref{weak_robin2}. Moreover, since $\Omega$ is bounded and convex, hence Lipschitz, the embedding $H^1(\Omega)\subset L^2(\Omega)$ is compact.  Hence $T+CI$ has compact resolvent, its spectrum consists of a divergent sequence of positive eigenvalues of finite multiplicity and the corresponding eigenfunctions can be chosen to form an orthonormal basis of $L^2(\Omega)$. In particular, it follows that the eigenvalues of $T$ form a sequence of the type
\begin{equation}\label{spectrum}
-C<\lambda_1<\lambda_2\leq\cdots\leq\lambda_j\leq\cdots\nearrow+\infty.
\end{equation}
We denote by$\{u_j\}_{j=1}^{\infty}$ a corresponding orthonormal basis of $L^2(\Omega)$ of eigenfunctions. Note that if $\beta>0$ all eigenvalues are positive, while if $\beta<0$ only a finite number of eigenvalues can be negative.

It is also standard to see that the eigenvalues can be represented by means of the Min-Max Principle as follows:
\begin{equation}
\lambda_j=\min_{\substack{U\subset H^1(\Omega)\\{\rm dim}\,U=j}}\max_{0\ne u\in U}\frac{\int_{\Omega}|\nabla u|^2+\beta\int_{\partial\Omega}u^2}{\int_{\Omega}u^2}.
\end{equation}

As mentioned in the introduction, one of the main arguments in the proof of the $H^2$ regularity of the Robin eigenfunctions is the approximation of a convex domain by smooth convex domains belonging to a uniform class. Namely, it will be required that the approximating domains can be locally described near the boundaries as the subgraphs of smooth functions with uniformly bounded gradients. It turns out that those domains belong to the same Lipschitz class. We find it convenient to recall the following definition 
from \cite{BuLa} involving the notion of atlas, see
also \cite{burenkov_book}. Given
a set $V\subset \R^n$ and a number $\delta>0$ we write
\begin{equation} \label{eq:def-Vdelta-B}
V_\delta:=\{x\in V:d(x,\partial V)>\delta\} \, ,
\end{equation}
where, for $x\in\mathbb R^n$ and $A\subset\mathbb R^n$, $d(x,A):=\inf_{a\in A}|x-a|$.
\begin{defi} \label{d:atlas-B}
Let $\delta>0$, $s,s'\in \N$ with
$s'<s$. Let $\{V_j\}_{j=1}^s$ be a family of open cuboids (i.e.
rotations of rectangle parallelepipeds in $\R^n$) and
$\{r_j\}_{j=1}^s$ be a family of rotations in $\R^n$. We say that
$\mathcal A=(\delta,s,s',\{V_j\}_{j=1}^s,\{r_j\}_{j=1}^s)$ is an
atlas in $\R^n$ with parameters
$\delta,s,s',\{V_j\}_{j=1}^s,\{r_j\}_{j=1}^s$ briefly an atlas in
$\R^n$. We say that a bounded domain $\Omega\subset \R^n$ is of class
$C^{0,1}_M(\mathcal A)$ if
\begin{itemize}
\item[(i)] $\Omega\subset \cup_{j=1}^s (V_j)_\delta$ \ and \
$(V_j)_\delta \cap \Omega\neq \emptyset$ where $(V_j)_\delta$ is meant
in the sense given in \eqref{eq:def-Vdelta-B} ;

\item[(ii)] $V_j\cap \partial\Omega \neq \emptyset$ \ for \
$j=1,\dots,s'$ \ and \ $V_j\cap \partial\Omega=\emptyset$ \ for \
$s'+1\le j\le s$;

\item[(iii)] for $j=1,\dots,s$ \ we have
\begin{align*}
& r_j(V_j)=\{x\in \R^n:a_{ij}<x_i<b_{ij}\, , i=1,\dots,n\}, \quad
j=1,\dots,s ; \\
& r_j(V_j\cap \Omega)=\{x=(x',x_n)\in\R^n:x'\in W_j,
a_{nj}<x_n<g_j(x')\}, \quad j=1,\dots, s'
\end{align*}
where $x'=(x_1,\dots,x_{n-1})$, $W_j=\{x'\in
\R^{n-1}:a_{ij}<x_i<b_{ij}, i=1,\dots,n-1\}$ and the functions
$g_j\in C^{0,1}(\overline{W_j})$ for any $j\in 1,\dots,s'$ with $\|\nabla g_j\|_{\infty}\leq M$. Moreover, for $j=1,\dots,s'$
we assume that $a_{nj}+\delta\le g_j(x')\le b_{nj}-\delta$, for all
$x'\in \overline{W_j}$.
\end{itemize}

\noindent We say that a bounded domain $\Omega\subset\R^n$ is of
class $C^{0,1}(\mathcal A)$ if $\Omega$ is of class
$C^{0,1}_M(\mathcal A)$ for some $M>0$.

\noindent Finally we say that $\Omega$ if of class $C^{0,1}$
if it is of class $C^{0,1}_M(\mathcal A)$ for some atlas
$\mathcal A$ and some $M>0$.
\end{defi}

We also need the following technical lemma on a trace inequality with a small parameter.

\begin{lemme}\label{traceC}
Let $\Omega$ be a bounded domain of class $C^{0,1}_M(\mathcal A)$. For any $\varepsilon>0$ there exists a positive constant $C(\varepsilon)$ depending only on $\varepsilon,\mathcal A, M$ such that
\begin{equation}\label{traceCineq}
\int_{\partial\Omega}u^2\leq\varepsilon\int_{\Omega}|\nabla u|^2+C(\varepsilon)\int_{\Omega}u^2,
\end{equation}
for all $u\in H^1(\Omega)$.
\end{lemme}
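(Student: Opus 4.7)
The plan is to prove \eqref{traceCineq} by a standard localization and integration by parts argument, carefully tracking constants so they depend only on $\varepsilon$, $\mathcal A$, and $M$.

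First, I would fix a smooth partition of unity $\{\psi_j\}_{j=1}^{s'}\cup\{\psi_0\}$ subordinate to the open cover $\{V_j\}_{j=1}^{s'}\cup\{\Omega_\delta\}$ of $\overline{\Omega}$, where $\psi_0$ is supported away from $\partial\Omega$. Since the cuboids $V_j$ and the parameter $\delta$ are determined by the atlas $\mathcal A$, one can arrange $\|\psi_j\|_{L^\infty},\|\nabla\psi_j\|_{L^\infty}\le K$ with $K=K(\mathcal A)$. This reduces \eqref{traceCineq} to proving, for each boundary index $j\in\{1,\dots,s'\}$, an inequality of the form
\begin{equation*}
\int_{\partial\Omega\cap V_j}\psi_j u^2 \le \varepsilon\int_\Omega|\nabla u|^2 + C(\varepsilon)\int_\Omega u^2,
\end{equation*}
since $\partial\Omega\subset\bigcup_{j=1}^{s'}V_j$ and $\sum_{j=1}^{s'}\psi_j\equiv 1$ on $\partial\Omega$.

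Next I would work in the rotated coordinates given by $r_j$, so that $V_j\cap\Omega=\{(x',x_n):x'\in W_j,\ a_{nj}<x_n<g_j(x')\}$ with $\|\nabla g_j\|_\infty\le M$. The unit outer normal on $\partial\Omega\cap V_j$ is $\nu=(1+|\nabla g_j|^2)^{-1/2}(-\nabla g_j,1)$, so the constant vector field $e_n$ satisfies the uniform lower bound
\begin{equation*}
e_n\cdot\nu \;=\;\frac{1}{\sqrt{1+|\nabla g_j|^2}} \;\ge\; \frac{1}{\sqrt{1+M^2}}\quad\text{on }\partial\Omega\cap V_j.
\end{equation*}
Applying the divergence theorem to $\psi_j u^2 e_n$ and using that $\psi_j$ vanishes on $\partial V_j\setminus\partial\Omega$ gives
\begin{equation*}
\int_{\partial\Omega\cap V_j}\psi_j u^2 (e_n\cdot\nu)\,d\sigma
= \int_\Omega 2\psi_j u\,\partial_{x_n}u\,dx + \int_\Omega u^2\,\partial_{x_n}\psi_j\,dx.
\end{equation*}
Now I would apply Young's inequality $2ab\le \tilde\varepsilon a^2+\tilde\varepsilon^{-1}b^2$ to the first term on the right with $\tilde\varepsilon=\varepsilon/s'$ (absorbing the factor $\|\psi_j\|_\infty\le K$), and bound the second term by $K\int_\Omega u^2$. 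Combined with the lower bound on $e_n\cdot\nu$, this yields
\begin{equation*}
\int_{\partial\Omega\cap V_j}\psi_j u^2 \;\le\; \sqrt{1+M^2}\left(\frac{\varepsilon}{s'}\int_\Omega|\nabla u|^2 + \Bigl(\frac{s' K^2}{\varepsilon}+K\Bigr)\int_\Omega u^2\right).
\end{equation*}
Summing over $j=1,\dots,s'$ gives \eqref{traceCineq} with $C(\varepsilon)$ depending only on $\varepsilon$, $s'$, $K$, and $M$, hence only on $\varepsilon$, $\mathcal A$, $M$.

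There is no real obstacle here; the only point requiring care is the uniformity of the partition-of-unity constants with respect to the atlas, which is standard and justified by the fact that the geometry of the cuboids $V_j$ and the separation parameter $\delta$ are part of the data of $\mathcal A$.
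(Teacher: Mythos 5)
Your proof is correct and follows the same basic strategy as the paper's: localize near the boundary using the atlas, integrate $u^2$ by parts in the vertical direction, and apply Young's inequality so that the small parameter $\varepsilon$ lands on the gradient term. The one genuine difference is in how the local geometry is handled. The paper first treats the flat-boundary case with a cutoff $\zeta$ and then transplants the estimate to the general case via a bi-Lipschitz flattening map $\phi$ with $\|\nabla\phi\|_{\infty},\|\nabla\phi^{-1}\|_{\infty}\leq L(M)$, rescaling $\varepsilon$ after the change of variables; you instead work directly in the rotated subgraph $\{(x',x_n): x'\in W_j,\ a_{nj}<x_n<g_j(x')\}$ and apply the divergence theorem to $\psi_j u^2 e_n$, recovering the surface integral from the pointwise lower bound $e_n\cdot\nu\geq (1+M^2)^{-1/2}$ on the graph. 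Your variant is arguably cleaner: it avoids the change of variables entirely and keeps the dependence on $M$ explicit, at the small price of having to observe that $\psi_j$ (supported compactly in $V_j$) annihilates the lateral and bottom portions of $\partial(V_j\cap\Omega)$ so that only the graph contributes. The only point you leave implicit — as does the paper, which only sketches the argument — is that the integration by parts should first be carried out for $u$ smooth up to the boundary and then extended to all of $H^1(\Omega)$ by density, using continuity of the trace on the Lipschitz domain.
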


\begin{proof}
The proof is an adaptation of the classical proof of the Trace Theorem. We sketch the main steps. Following e.g., \cite[Theorem 1, \S 5.5]{evansbook}, we begin by assuming that $u$ is a smooth function and that the boundary is flat in a neighborhood of a point $x_0\in\partial\Omega$. Namely, we assume that $\partial\Omega\cap B(x_0,r)=\{x=(x_1,...,x_n)\in B(x_0,r):x_n=0\}$. Next, we consider a non-negative smooth function $\zeta$ with compact support in $B$ such that $\zeta\equiv 1$ in $B(x_0,r/2)$. Then by the Divergence Theorem and by Cauchy's inequality we get
\begin{multline}
\int_{\partial\Omega\cap B(x_0,r/2)}u^2\leq\int_{\partial\Omega\cap B(x_0,r)}\zeta u^2=-\int_{\Omega\cap B(x_0,r)}\partial_{x_n}(\zeta u^2)\\
=-\int_{\Omega\cap B(x_0,r)}u^2\partial_{x_n}\zeta-2\int_{\Omega\cap B(x_0,r)}\zeta u\partial_{x_n}u\\
\leq \|\partial_{x_n}\zeta\|_{\infty}\int_{\Omega\cap B(x_0,r)}u^2+\varepsilon\|\zeta\|_{\infty}\int_{\Omega\cap B(x_0,r)}|\partial_{x_n}u|^2+\frac{\|\zeta\|_{\infty}}{\varepsilon}\int_{\Omega\cap B(x_0,r)}u^2\\
\leq \varepsilon\|\zeta\|_{\infty}\int_{\Omega\cap B(x_0,r)}|\partial_{x_n}u|^2+\left(\|\partial_{x_n}\zeta\|_{\infty}+\frac{\|\zeta\|_{\infty}}{\varepsilon}\right)\int_{\Omega\cap B(x_0,r)}u^2.
\end{multline}
In the general case we flatten the boundary near $x_0$ by means of a bi-Lipschitz transformation $\phi$ with $\|\nabla\phi\|_{\infty}$, $\|\nabla\phi^{-1}\|_{\infty}\leq L$ where $L$ depends only on $M$. By changing variables in integrals and rescaling $\varepsilon$, we obtain
\begin{equation}
\int_{\partial\Omega\cap B(x_0,r/2)}u^2\leq\varepsilon\int_{\Omega\cap B(x_0,r)}|\nabla u|^2+C(\varepsilon)\int_{\Omega\cap B(x_0,r)}u^2.
\end{equation}
The proof can be completed by covering the boundary of $\Omega$ with a finite number (depending only on $\mathcal A$) of balls.
\end{proof}

By Lemma \ref{traceC} we immediately deduce the following

\begin{cor}\label{coercivity}
Under the same assumptions of Lemma \ref{traceC}, we have that for  any $\beta\in\mathbb R$ there exists a non-negative constant $C$ such that the quadratic form
$$
\int_{\Omega}|\nabla u|^2+\beta\int_{\partial\Omega}u^2+C\int_{\Omega}u^2
$$
is coercive in $H^1(\Omega)$. In particular, if $\beta\geq 0$ one can choose any positive $C$, while if $\beta<0$, we have for any $\varepsilon\in(0,-1/\beta)$ that the following inequality holds:
\begin{equation}\label{coercive_N}
\int_{\Omega}|\nabla u|^2+\beta\int_{\partial\Omega}u^2-2\beta C(\varepsilon)\int_{\Omega}u^2\geq\min\{1+\beta\varepsilon,-\beta C(\varepsilon)\}\int_{\Omega}\left(|\nabla u|^2+u^2\right)
\end{equation}
for all $u\in H^1(\Omega)$.
\end{cor}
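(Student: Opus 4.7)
The plan is to split on the sign of $\beta$ and invoke Lemma~\ref{traceC} only when needed.

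First I would dispose of the easy case $\beta \geq 0$. Since the boundary term is then non-negative, for any $C > 0$ we have
$$\int_{\Omega}|\nabla u|^2+\beta\int_{\partial\Omega}u^2+C\int_{\Omega}u^2 \geq \int_{\Omega}|\nabla u|^2+C\int_{\Omega}u^2 \geq \min\{1,C\}\int_{\Omega}\left(|\nabla u|^2+u^2\right),$$
so coercivity in $H^1(\Omega)$ is immediate with any positive $C$.

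The substantive case is $\beta < 0$. Here I would apply Lemma~\ref{traceC} to the boundary integral with an $\varepsilon > 0$ to be chosen. Since $\beta < 0$, multiplying the trace inequality by $\beta$ flips its direction, yielding
$$\beta\int_{\partial\Omega}u^2 \geq \beta\varepsilon\int_{\Omega}|\nabla u|^2 + \beta C(\varepsilon)\int_{\Omega}u^2,$$
and therefore
$$\int_{\Omega}|\nabla u|^2+\beta\int_{\partial\Omega}u^2 \geq (1+\beta\varepsilon)\int_{\Omega}|\nabla u|^2 + \beta C(\varepsilon)\int_{\Omega}u^2.$$
Restricting $\varepsilon \in (0,-1/\beta)$ ensures the gradient coefficient $1+\beta\varepsilon$ is positive; the $u^2$ coefficient $\beta C(\varepsilon)$ is negative but harmless, since it can be absorbed by the free additive term.

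To finish, I would simply pick $C = -2\beta C(\varepsilon) > 0$. Adding $C\int_{\Omega}u^2$ to both sides of the previous display converts $\beta C(\varepsilon)\int_{\Omega}u^2$ into $-\beta C(\varepsilon)\int_{\Omega}u^2$, and the lower bound becomes
$$(1+\beta\varepsilon)\int_{\Omega}|\nabla u|^2 + (-\beta C(\varepsilon))\int_{\Omega}u^2 \geq \min\{1+\beta\varepsilon,\,-\beta C(\varepsilon)\}\int_{\Omega}\left(|\nabla u|^2+u^2\right),$$
which is exactly the claimed estimate \eqref{coercive_N}, proving coercivity. There is no real obstacle here: the whole argument is an algebraic consequence of Lemma~\ref{traceC}, with the only subtlety being the correct choice of $\varepsilon$ relative to $-1/\beta$ to keep $1+\beta\varepsilon$ positive.
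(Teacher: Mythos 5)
Your proof is correct and is precisely the computation the paper leaves implicit: the authors state that the corollary is ``immediately deduced'' from Lemma~\ref{traceC}, and your case split on the sign of $\beta$, the sign flip when multiplying the trace inequality by $\beta<0$, and the choice $C=-2\beta C(\varepsilon)$ reproduce exactly the intended derivation of \eqref{coercive_N}. No gaps.
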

\section{$H^2$-estimates on smooth convex domains}\label{sec:estimates}

In this section we prove $H^2$ estimates of $L^2$-normalized Robin eigenfunctions on smooth convex domains $\Omega$. It turns out that these estimates depend only on $\beta$ (the Robin parameter), $\lambda$ (the eigenvalue), the dimension, the diameter, the inradius of the domain, as well as on the $L^2$ norm of the boundary traces. We recall that the diameter $D$ and the inradius $\rho$ of $\Omega$  are defined by
$$
D=\sup_{x,y\in\Omega}|x-y|\,,\ \ \ \rho=\sup_{x\in\Omega}\inf_{y\in\partial\Omega}|x-y|.
$$

In order to prove our estimates we need the Rellich-Pohozaev identity \cite{pohozaev,rellich} and the Reilly's formula \cite{Reilly}. The version of the Rellich-Pohozaev identity used in this paper is an adaptation to the Robin case of the identity proved in \cite[Lemma~3.1]{PS_steklov}.

\begin{thm}[Rellich-Poho\v{z}aev identity]\label{rellich}
Let $\Omega$ be a bounded smooth domain in $\mathbb R^n$ and let $u\in H^2(\Omega)$ be such that $-\Delta u=\lambda u$ in $L^2(\Omega)$. Then
\begin{multline}\label{rellich_id}
\frac{\lambda}{2}\int_{\partial\Omega}u^2\,x\cdot\nu-\frac{\lambda n}{2}\int_{\Omega}u^2\\+\frac{1}{2}\int_{\partial\Omega}(\partial_{\nu}u)^2\,x\cdot\nu-\frac{1}{2}\int_{\partial\Omega}|\nabla_{\partial\Omega}u|^2\,x\cdot\nu+\int_{\partial\Omega}\partial_{\nu}u\,x\cdot\nabla_{\partial\Omega}u+\frac{n-2}{2}\int_{\Omega}|\nabla u|^2=0,
\end{multline}
where $x$ denotes the position vector.
\end{thm}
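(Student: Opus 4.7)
The proof I would give is the classical Rellich--Poho\v{z}aev calculation: multiply the equation $-\Delta u=\lambda u$ by the radial field $x\cdot\nabla u$ and integrate over $\Omega$. On the right-hand side, since $u\in H^2(\Omega)\subset H^1(\Omega)$ and $2u(x\cdot\nabla u)=x\cdot\nabla(u^2)$, the divergence theorem yields
\begin{equation*}
\lambda\int_{\Omega}u\,(x\cdot\nabla u)=\frac{\lambda}{2}\int_{\partial\Omega}u^2\,x\cdot\nu-\frac{\lambda n}{2}\int_{\Omega}u^2,
\end{equation*}
which already produces the first two terms in \eqref{rellich_id}.

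On the left-hand side I would integrate by parts once, using that $x\cdot\nabla u\in H^1(\Omega)$ (as $x$ is smooth and $\nabla u\in H^1(\Omega)^n$) and that $u\in H^2(\Omega)$:
\begin{equation*}
-\int_\Omega \Delta u\,(x\cdot\nabla u)=-\int_{\partial\Omega}\partial_\nu u\,(x\cdot\nabla u)+\int_\Omega \nabla u\cdot\nabla(x\cdot\nabla u).
\end{equation*}
A direct computation gives $\nabla u\cdot\nabla(x\cdot\nabla u)=|\nabla u|^2+\tfrac12 x\cdot\nabla(|\nabla u|^2)$, and the divergence theorem applied to the $W^{1,1}$ function $|\nabla u|^2$ converts the last piece into $\tfrac12\int_{\partial\Omega}|\nabla u|^2\,x\cdot\nu-\tfrac{n}{2}\int_\Omega|\nabla u|^2$, producing the coefficient $(n-2)/2$ of $\int_\Omega|\nabla u|^2$ and a raw boundary integral of $|\nabla u|^2\,x\cdot\nu$.

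To match the final form, I would then split the boundary gradient into its normal and tangential parts: on $\partial\Omega$, $\nabla u=(\partial_\nu u)\nu+\nabla_{\partial\Omega}u$, whence
\begin{equation*}
x\cdot\nabla u=(\partial_\nu u)(x\cdot\nu)+x\cdot\nabla_{\partial\Omega}u,\qquad |\nabla u|^2=(\partial_\nu u)^2+|\nabla_{\partial\Omega}u|^2.
\end{equation*}
Substituting and collecting terms turns the boundary contribution into exactly
\begin{equation*}
\tfrac12\!\int_{\partial\Omega}(\partial_\nu u)^2\,x\cdot\nu-\tfrac12\!\int_{\partial\Omega}|\nabla_{\partial\Omega}u|^2\,x\cdot\nu+\!\int_{\partial\Omega}\partial_\nu u\,x\cdot\nabla_{\partial\Omega}u,
\end{equation*}
which combines with the volume pieces from the two previous steps to yield \eqref{rellich_id}.

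The only delicate point is the legitimacy of these manipulations under the mere $H^2$ assumption, since the pointwise identity uses third derivatives. I would handle this by density: pick $u_k\in C^\infty(\overline\Omega)$ with $u_k\to u$ in $H^2(\Omega)$ (available because $\Omega$ is smooth), verify \eqref{rellich_id} for each $u_k$ by the argument above (where everything is classical), and pass to the limit using the continuity of the trace map $H^2(\Omega)\to H^{3/2}(\partial\Omega)\times H^{1/2}(\partial\Omega)^n$ applied to $(u,\nabla u)$, together with the $L^2(\partial\Omega)$-convergence of traces; note that $-\Delta u_k\to-\Delta u$ in $L^2(\Omega)$ but we need not have $-\Delta u_k=\lambda u_k$ pointwise, so I would instead apply the identity in the form obtained before using the eigenvalue equation (i.e., keep $\int_\Omega(-\Delta u)(x\cdot\nabla u)$ on the left) and invoke $-\Delta u=\lambda u$ only at the very end. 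This approximation argument is the main, though essentially routine, technical step.
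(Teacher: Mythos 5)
Your proposal is correct and follows essentially the same route as the paper: test the equation against the radial field $x\cdot\nabla u$, evaluate $\lambda\int_\Omega u\,(x\cdot\nabla u)$ by the divergence theorem, and convert $\int_\Omega\Delta u\,(x\cdot\nabla u)$ into boundary terms via integration by parts and the normal/tangential splitting of $\nabla u$ on $\partial\Omega$. The only difference is that the paper imports the identity \eqref{RP1} as a citation of an external lemma, whereas you derive it from scratch (including the density argument justifying the manipulations for $u\in H^2(\Omega)$), which makes your version more self-contained but not different in substance.
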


\begin{proof}
From \cite[Lemma~3.1]{PS_steklov} we deduce that
\begin{equation}\label{RP1}
\int_{\Omega}\Delta u\, x\cdot\nabla u=\frac{1}{2}\int_{\partial\Omega}(\partial_{\nu}u)^2\,x\cdot\nu-\frac{1}{2}\int_{\partial\Omega}|\nabla_{\partial\Omega}u|^2\,x\cdot\nu+\int_{\partial\Omega}\partial_{\nu}u\,x\cdot\nabla_{\partial\Omega}u+\frac{n-2}{2}\int_{\Omega}|\nabla u|^2.
\end{equation}
On the other hand,
\begin{equation}
\int_{\Omega}u\,x\cdot\nabla u=\int_{\partial\Omega}u^2\, x\cdot\nu-\int_{\Omega}u\,{\rm div}(ux)=\int_{\partial\Omega}u^2\, x\cdot\nu-n\int_{\Omega}u^2-\int_{\Omega}u\, x\cdot\nabla u
\end{equation}
from which we get
\begin{equation}\label{RP2}
\int_{\Omega}u\,x\cdot\nabla u=\frac{1}{2}\int_{\partial\Omega}u^2\,x\cdot\nu-\frac{n}{2}\int_{\Omega}u^2.
\end{equation}
The proof is concluded by using the equality $\int_{\Omega}(\Delta u+\lambda u)\,x\cdot\nabla u=0$ and \eqref{RP1} and \eqref{RP2}.
\end{proof}

In what follows, by $D^2u$ we denote Hessian matrix of a function $u$  and $|D^2u|^2=\sum_{i,j=1}^n(\partial^2_{ij}u)^2$.
\begin{thm}[Reilly's formula]
Let $\Omega$ be a bounded smooth domain in $\mathbb R^n$ and let $u\in H^2(\Omega)$. Then
\begin{equation}\label{reilly_for}
\int_{\Omega}|D^2u|^2=\int_{\Omega}(\Delta u)^2-\int_{\partial\Omega}\left[(n-1)\mathcal H(\partial_{\nu}u)^2+2\Delta_{\partial\Omega}u\partial_{\nu}u+II(\nabla_{\partial\Omega}u,\nabla_{\partial\Omega}u)\right],
\end{equation}
where $\mathcal H$ is the mean curvature of the boundary, $II$ is the second fundamental form of the boundary, $\Delta_{\partial\Omega}$ and $\nabla_{\partial\Omega}$ are the boundary Laplacian and gradient, respectively.
\end{thm}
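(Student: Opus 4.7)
The plan is to derive Reilly's formula from a pointwise Bochner-type identity, integrate by parts twice, and then reduce the resulting boundary integrals via the intrinsic geometry of $\partial\Omega$.

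First, for $u\in C^3(\overline{\Omega})$ (the general case $u\in H^2(\Omega)$ will follow by density) I would use the pointwise identity
\[
\tfrac{1}{2}\Delta(|\nabla u|^2) = |D^2 u|^2 + \nabla u\cdot\nabla(\Delta u).
\]
Integrating over $\Omega$ and applying the divergence theorem twice produces
\[
\int_{\Omega}|D^2 u|^2 = \int_{\Omega}(\Delta u)^2 + \tfrac{1}{2}\int_{\partial\Omega}\partial_\nu(|\nabla u|^2) - \int_{\partial\Omega}(\partial_\nu u)(\Delta u),
\]
so it only remains to rewrite the two boundary integrals in the form appearing on the right-hand side of \eqref{reilly_for}.

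Next, I would exploit the orthogonal decomposition $\nabla u = \nabla_{\partial\Omega}u + (\partial_\nu u)\nu$ on $\partial\Omega$ and the standard near-boundary formula
\[
\Delta u = \Delta_{\partial\Omega}u + (n-1)\mathcal H\,\partial_\nu u + D^2u(\nu,\nu).
\]
The identity $\partial_\nu(|\nabla u|^2) = 2D^2u(\nabla u,\nu)$ splits into a normal-normal piece $(\partial_\nu u)D^2u(\nu,\nu)$ and a tangential-normal piece $D^2u(\nabla_{\partial\Omega}u,\nu)$. For the latter, tangential differentiation of the identity $\partial_\nu u = \nabla u\cdot\nu$, combined with the fact that the tangential covariant derivative of $\nu$ is (up to sign) the shape operator associated with $II$, yields the key identity
\[
D^2u(\nabla_{\partial\Omega}u,\nu) = \nabla_{\partial\Omega}(\partial_\nu u)\cdot\nabla_{\partial\Omega}u - II(\nabla_{\partial\Omega}u,\nabla_{\partial\Omega}u).
\]

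Finally, since $\partial\Omega$ is a closed manifold, tangential integration by parts yields
\[
\int_{\partial\Omega}\nabla_{\partial\Omega}(\partial_\nu u)\cdot\nabla_{\partial\Omega}u = -\int_{\partial\Omega}(\partial_\nu u)\,\Delta_{\partial\Omega}u.
\]
Substituting the Laplacian decomposition into the second boundary integral cancels the $D^2u(\nu,\nu)$-contribution produced by the first, the two $(\partial_\nu u)\Delta_{\partial\Omega}u$-terms coalesce with coefficient $2$, the $(n-1)\mathcal H(\partial_\nu u)^2$ term appears with the correct sign, and $II(\nabla_{\partial\Omega}u,\nabla_{\partial\Omega}u)$ survives, giving exactly \eqref{reilly_for}. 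The main technical obstacle is the careful derivation of the geometric identity for $D^2u(\nabla_{\partial\Omega}u,\nu)$ — this is the step in which the second fundamental form truly enters — and it requires choosing an appropriate local frame near $\partial\Omega$ (for instance, extending $\nu$ smoothly to a tubular neighborhood) and keeping track of sign conventions for $II$ and $\mathcal H$. The remainder of the argument is a careful accounting of boundary terms.
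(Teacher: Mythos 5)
The paper does not actually prove this statement: Reilly's formula is quoted as a known result with a citation to Reilly's original paper, so there is no in-paper argument to compare against. Your derivation is the standard one and, as far as I can check, it is correct: the Bochner identity $\tfrac12\Delta(|\nabla u|^2)=|D^2u|^2+\nabla u\cdot\nabla(\Delta u)$ (with no curvature term since the ambient space is flat), two applications of the divergence theorem, the splitting $\Delta u=\Delta_{\partial\Omega}u+(n-1)\mathcal H\,\partial_\nu u+D^2u(\nu,\nu)$ on $\partial\Omega$, and the Weingarten computation $D^2u(\nabla_{\partial\Omega}u,\nu)=\nabla_{\partial\Omega}(\partial_\nu u)\cdot\nabla_{\partial\Omega}u-II(\nabla_{\partial\Omega}u,\nabla_{\partial\Omega}u)$ do combine exactly as you say: the $D^2u(\nu,\nu)$ contributions cancel, tangential integration by parts on the closed manifold $\partial\Omega$ doubles the $\Delta_{\partial\Omega}u\,\partial_\nu u$ term, and the signs of $\mathcal H$ and $II$ come out consistent with the convention ($S(X)=D_X\nu$ for the outer normal, so $II\geq 0$ and $\mathcal H\geq 0$ on convex domains) that the paper relies on in \eqref{reilly_appl}. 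The only point I would ask you to make explicit is the final density step: for general $u\in H^2(\Omega)$ one has $\partial_\nu u\in H^{1/2}(\partial\Omega)$ but only $\Delta_{\partial\Omega}u\in H^{-1/2}(\partial\Omega)$, so the term $\int_{\partial\Omega}\Delta_{\partial\Omega}u\,\partial_\nu u$ must be read as a duality pairing (equivalently, left in the integrated-by-parts form $-\int_{\partial\Omega}\nabla_{\partial\Omega}u\cdot\nabla_{\partial\Omega}(\partial_\nu u)$ is not available either, so the pairing interpretation is the right one); with that reading every term in \eqref{reilly_for} is continuous in the $H^2(\Omega)$ norm on a smooth domain and the passage from $C^3(\overline\Omega)$ to $H^2(\Omega)$ goes through. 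This is harmless here because the paper only applies the formula to eigenfunctions on smooth domains, where $u$ is smooth up to the boundary.
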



We are now ready to prove  the following
\begin{lemme}\label{hessian_smooth}
Let $\Omega$ be a bounded smooth convex domain in $\mathbb R^n$ with diameter $D$ and inradius $\rho$. Let $u$ be an eigenfunction  of \eqref{weak_robin} corresponding to the  eigenvalue $\lambda$, normalized by $\int_{\Omega}u^2=1$. Then
\begin{equation}\label{hessian_smooth_formula}
\int_{\Omega}|D^2u|^2\leq\lambda^2- 2\min\{\beta,0\}C\left(D,\rho,\beta,\int_{\partial\Omega}u^2\right)
\end{equation}
where 
\begin{equation}\label{const}
C(D,\rho,\beta,t):=\left(D|\beta|t^{1/2}+\sqrt{(D^2\beta^2-\left(\beta(n-2)-|\lambda+\beta^2|D\right)\rho )t-2\lambda\rho}\right)^2\rho^{-2}.
\end{equation}
In particular, if $\beta\geq 0$ then
\begin{equation}\label{hessian_smooth_formula3}
\int_{\Omega}|D^2u|^2\leq\lambda^2,
\end{equation}
while if $\beta<0$, then
\begin{equation}\label{hessian_smooth_formula2}
\int_{\Omega}|D^2u|^2\leq\lambda^2- 2\beta C\left(D,\rho,\beta,\frac{\varepsilon\lambda+C(\varepsilon)}{1+\varepsilon\beta}\right),
\end{equation}
for any $\varepsilon\in(0,-1/\beta)$, where $C(\varepsilon)$ is as in Lemma \ref{traceC}.
\end{lemme}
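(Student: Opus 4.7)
The plan is to combine Reilly's formula with the Rellich–Pohozaev identity, exactly as in the Steklov case of \cite{lampro_steklov}, exploiting convexity to discard the boundary curvature terms and to guarantee that $x\cdot\nu$ is bounded below by the inradius.

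First I would apply Reilly's formula \eqref{reilly_for} to the eigenfunction $u$. Since $\Delta u=-\lambda u$ and $\int_\Omega u^2=1$, the bulk term is simply $\int_\Omega(\Delta u)^2=\lambda^2$. Using the Robin boundary condition $\partial_\nu u=-\beta u$, integrating by parts on the closed hypersurface $\partial\Omega$ gives
\begin{equation*}
-2\int_{\partial\Omega}\Delta_{\partial\Omega}u\,\partial_\nu u=2\int_{\partial\Omega}\nabla_{\partial\Omega}u\cdot\nabla_{\partial\Omega}(\partial_\nu u)=-2\beta\int_{\partial\Omega}|\nabla_{\partial\Omega}u|^2,
\end{equation*}
while the remaining two boundary contributions are non-negative because $\Omega$ is convex: $(n-1)\mathcal H(\partial_\nu u)^2=(n-1)\beta^2\mathcal H u^2\ge 0$ and $II(\nabla_{\partial\Omega}u,\nabla_{\partial\Omega}u)\ge 0$. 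Dropping these two non-negative terms from the right-hand side yields the key intermediate estimate
\begin{equation*}
\int_\Omega|D^2u|^2\le \lambda^2-2\beta\int_{\partial\Omega}|\nabla_{\partial\Omega}u|^2,
\end{equation*}
which immediately gives \eqref{hessian_smooth_formula3} in the case $\beta\ge 0$, since the last term is non-positive.

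For $\beta<0$ I need an upper bound on $X:=\int_{\partial\Omega}|\nabla_{\partial\Omega}u|^2$. This is where the Rellich–Pohozaev identity \eqref{rellich_id} enters. Choosing the origin at the incenter we get $\rho\le x\cdot\nu\le D$ on $\partial\Omega$ and $|x|\le D$ in $\overline\Omega$. Substituting $\partial_\nu u=-\beta u$ in \eqref{rellich_id} and using the weak formulation $\int_\Omega|\nabla u|^2=\lambda-\beta t$ (where $t:=\int_{\partial\Omega}u^2$), isolating the term $\tfrac{1}{2}\int_{\partial\Omega}|\nabla_{\partial\Omega}u|^2\,x\cdot\nu$ and estimating $\bigl|\beta\int_{\partial\Omega}u\,x\cdot\nabla_{\partial\Omega}u\bigr|\le|\beta|Dt^{1/2}X^{1/2}$ by Cauchy–Schwarz, one arrives at a quadratic inequality of the form
\begin{equation*}
\tfrac{\rho}{2}X\le |\beta|Dt^{1/2}X^{1/2}+\tfrac{1}{2}\bigl(|\lambda+\beta^2|D-\beta(n-2)\bigr)t-\lambda.
\end{equation*}
Viewing this as a quadratic in $X^{1/2}$ and taking the positive root gives precisely the constant $C(D,\rho,\beta,t)$ of \eqref{const}, so that $X\le C(D,\rho,\beta,t)$, whence \eqref{hessian_smooth_formula} by the intermediate estimate and $-2\beta>0$.

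Finally, to replace $t$ in \eqref{hessian_smooth_formula} by an a priori quantity and obtain \eqref{hessian_smooth_formula2}, I would feed the trace inequality of Lemma \ref{traceC} into the identity $\int_\Omega|\nabla u|^2=\lambda-\beta t$: for $\varepsilon\in(0,-1/\beta)$,
\begin{equation*}
t\le \varepsilon\int_\Omega|\nabla u|^2+C(\varepsilon)\int_\Omega u^2=\varepsilon(\lambda-\beta t)+C(\varepsilon),
\end{equation*}
and since $1+\varepsilon\beta>0$, this gives $t\le (\varepsilon\lambda+C(\varepsilon))/(1+\varepsilon\beta)$. Plugging this into $C(D,\rho,\beta,t)$ (which is monotone nondecreasing in $t$) yields \eqref{hessian_smooth_formula2}. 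The main obstacle I anticipate is bookkeeping: keeping track of signs (particularly of $\lambda+\beta^2$, which may be negative for very negative $\beta$, hence the absolute value in \eqref{const}) and verifying that the estimates of the individual terms of \eqref{rellich_id} against $\rho$ and $D$ assemble exactly into the quadratic whose positive root is $C(D,\rho,\beta,t)^{1/2}$.
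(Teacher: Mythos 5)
Your proposal is correct and follows essentially the same route as the paper: Reilly's formula with the convexity of $\Omega$ to discard the $\mathcal H$ and $II$ terms, the Rellich--Pohozaev identity with $\rho\le x\cdot\nu\le D$ to produce the quadratic inequality in $\bigl(\int_{\partial\Omega}|\nabla_{\partial\Omega}u|^2\bigr)^{1/2}$ whose positive root is $C(D,\rho,\beta,t)^{1/2}$, and Lemma \ref{traceC} combined with $\int_\Omega|\nabla u|^2=\lambda-\beta t$ to eliminate $t$. The only (immaterial) difference is the order of the two main steps, and your explicit remark on the monotonicity of $C(D,\rho,\beta,t)$ in $t$ is a correct justification of a point the paper leaves implicit.
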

\begin{proof}
Since $\Omega$ is smooth, by classical elliptic regularity  we have that $u\in H^2(\Omega)$, see e.g. \cite[Chapter~2]{grisvard}. Following \cite{lampro_steklov} we first estimate the $L^2$-norm on $\partial\Omega$ of the tangential component of the gradient of $u$ by a constant depending only on $n,\lambda,\beta,D,\rho$ and the $L^2$-norm on $\partial\Omega$ of $u$.

Up to a translation, we may assume that $0\in\Omega$ and that $B_{\rho}$ is a ball of radius $\rho$ centered at $0$ contained in $\Omega$.
We observe that $\int_{\Omega}|\nabla u|^2=\lambda-\beta\int_{\partial\Omega}u^2$ and that $\partial_{\nu}u=-\beta u$ on $\partial\Omega$ in the sense of traces, which are well-defined in $L^2(\partial\Omega)$ since $u\in H^2(\Omega)$. After simplifications, we get from \eqref{rellich_id}
$$
2\lambda+\int_{\partial\Omega}|\nabla_{\partial\Omega}u|^2x\cdot\nu+2\beta\int_{\partial\Omega}u\,\nabla_{\partial\Omega}u\cdot x-(\lambda+\beta^2)\int_{\partial\Omega}u^2x\cdot\nu+\beta(n-2)\int_{\partial\Omega}u^2=0.
$$
Now, $|x|\leq D$ and $\rho\leq x\cdot\nu \leq D$. Note that the inequality $x\cdot\nu\geq\rho$ follows from the convexity of $\Omega$, see \cite[Lemma 5]{lampro_steklov}.
 Hence we have
$$
\frac{2\lambda}{\rho}+\int_{\partial\Omega}|\nabla_{\partial\Omega}u|^2-\frac{2|\beta| D}{\rho}\left(\int_{\partial\Omega}|\nabla_{\partial\Omega}u|^2\right)^{1/2}\left(\int_{\partial\Omega}u^2\right)^{1/2}-\frac{|\lambda+\beta^2|D}{\rho}\int_{\partial\Omega}u^2+\frac{\beta(n-2)}{\rho}\int_{\partial\Omega}u^2\leq 0
$$
which reads
$$
\frac{2\lambda}{\rho}+\int_{\partial\Omega}|\nabla_{\partial\Omega}u|^2-\frac{2|\beta| D}{\rho}\left(\int_{\partial\Omega}|\nabla_{\partial\Omega}u|^2\right)^{1/2}\left(\int_{\partial\Omega}u^2\right)^{1/2}+\frac{\beta(n-2)-|\lambda+\beta^2| D}{\rho}\int_{\partial\Omega}u^2\leq 0.
$$
This is a second order equation in $\left(\int_{\partial\Omega}|\nabla_{\partial\Omega}u|^2\right)^{1/2}$ which implies the upper bound
\begin{equation}\label{est_tang_grad}
\int_{\partial\Omega}|\nabla_{\partial\Omega} u|^2\leq C\left(D,\rho,\lambda,\int_{\partial\Omega}u^2\right).
\end{equation}
Note that $C\left(D,\rho,\lambda,\int_{\partial\Omega}u^2\right)$ is well-defined because the discriminant of the second order equation under consideration must be non-negative.

Now we use this estimate to bound the $L^2$-norm of the Hessian of $u$. To do so we first observe that the mean curvature $\mathcal H$ is non-negative on $\partial\Omega$ and that $II$ is a non-negative quadratic form on the tangent spaces to $\partial\Omega$. Then by the Reilly's formula \eqref{reilly_for} we get
\begin{multline}\label{reilly_appl}
\int_{\Omega}|D^2u|^2
=\lambda^2-\int_{\partial\Omega}[\beta^2(n-1)\mathcal H u^2-2\beta\Delta_{\partial\Omega}uu+II(\nabla_{\partial\Omega}u,\nabla_{\partial\Omega}u)]\\
\leq\lambda^2+2\beta\int_{\partial\Omega}\Delta_{\partial\Omega}uu=\lambda^2-2\beta\int_{\partial\Omega}|\nabla_{\partial\Omega}u|^2.
\end{multline}
By combining \eqref{est_tang_grad} and \eqref{reilly_appl} we deduce the validity of \eqref{hessian_smooth_formula}. Inequality \eqref{hessian_smooth_formula2} follows from  \eqref{traceCineq},  \eqref{hessian_smooth_formula} and the equality $\int_{\Omega}|\nabla u|^2=\lambda-\beta\int_{\partial\Omega}u^2$.
\end{proof}

\section{Spectral stability and proof of Theorem~\ref{mainthm}\label{approximation}}

We begin by recalling the following stability result from  \cite{ADR} and \cite[\S2]{HP}. By $d^{\mathcal{H}}(\Omega_1,\Omega_2)$ we denote the Hausdorff distance between two open sets $\Omega_1,\Omega_2$, defined by
$$
d^{\mathcal{H}}(\Omega_1,\Omega_2):=\max\{\sup_{x\in\Omega_1}d(x,\Omega_2),\sup_{y\in\Omega_2}d(y,\Omega_1)\}.
$$

\begin{lemme}\label{approximation_lem}
Let $\Omega$ be a bounded convex domain in $\mathbb R^n$ and let $\{\Omega_k\}_{k=1}^{\infty}$ be a sequence of smooth convex domains such that $\Omega\subset\Omega_k$, $\Omega_{k+1}\subset\Omega_k$ and $\lim_{k\to\infty}d^{\mathcal{H}}(\Omega,\Omega_k)=0$.   Then, the inradius and the diameter of $\Omega_k$ converge to the inradius and the diameter  of $\Omega$ respectively, as $k\to\infty$. Moreover, for any $u_k\in H^1(\mathbb R^n)$ converging weakly in $H^1(\mathbb R^n)$ to $u\in H^1(\mathbb R^n)$ we have
$$
\lim_{k\to\infty}\int_{\partial\Omega_k}u_k^2=\int_{\partial\Omega}u^2.
$$
\end{lemme}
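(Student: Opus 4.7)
My plan is to establish the three assertions in turn. The first two are purely geometric consequences of Hausdorff convergence combined with the inclusion $\Omega\subset\Omega_k$. For the diameter: $\Omega\subset\Omega_k$ gives $D(\Omega)\le D(\Omega_k)$, and picking $x_k,y_k\in\overline{\Omega_k}$ realizing $D(\Omega_k)$ together with points $\tilde x_k,\tilde y_k\in\overline{\Omega}$ within $d^{\mathcal H}(\Omega_k,\Omega)$ of them yields $D(\Omega_k)\le D(\Omega)+2d^{\mathcal H}(\Omega_k,\Omega)$. For the inradius: $\rho(\Omega)\le\rho(\Omega_k)$ is automatic, and if $\rho_{k_j}\ge\rho(\Omega)+\varepsilon$ along a subsequence then inscribed balls $B(x_{k_j},\rho(\Omega)+\varepsilon)\subset\Omega_{k_j}$ with $x_{k_j}\to x_\infty$ would produce $B(x_\infty,\rho(\Omega)+\varepsilon/2)\subset\overline{\Omega}$ (because any $z$ in it eventually lies in $\Omega_{k_j}$, so $d(z,\Omega)\le\sup_{y\in\Omega_{k_j}}d(y,\Omega)\to 0$); by convexity, $\Omega$ equals the interior of its closure, so this ball actually lies in $\Omega$, contradicting the definition of $\rho(\Omega)$.

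The main obstacle is the boundary integral. A key preliminary is that the convergence of the diameters and inradii just proved, together with convexity, places every $\Omega_k$ in a common class $C^{0,1}_M(\mathcal A)$, so Lemma \ref{traceC} applies with constants independent of $k$. I will split
\begin{equation*}
\int_{\partial\Omega_k}u_k^2-\int_{\partial\Omega}u^2=\int_{\partial\Omega_k}(u_k-u)(u_k+u)+\left(\int_{\partial\Omega_k}u^2-\int_{\partial\Omega}u^2\right).
\end{equation*}
The first summand, by Cauchy--Schwarz and the uniform trace inequality, is bounded up to a uniformly bounded factor by $\bigl(\varepsilon\|\nabla(u_k-u)\|^2_{L^2}+C(\varepsilon)\|u_k-u\|^2_{L^2}\bigr)^{1/2}$; the $L^2$ piece vanishes as $k\to\infty$ since the weak $H^1$-convergence $u_k\rightharpoonup u$ and Rellich's theorem give strong $L^2$-convergence on any fixed bounded open set containing all the $\Omega_k$, after which I let $\varepsilon\to 0$.

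The genuinely delicate piece, and the reason the authors rely on \cite{ADR,HP}, is the second summand. My strategy is to approximate $u\in H^1(\mathbb R^n)$ by functions $v\in C^\infty_c(\mathbb R^n)$, absorbing the approximation error uniformly in $k$ via the same trace inequality, and then prove $\int_{\partial\Omega_k}v^2\to\int_{\partial\Omega}v^2$ for smooth $v$. For convex bodies this follows from classical convex-geometric arguments: fix an interior point $x_0\in\Omega$ and write $\partial\Omega_k$ and $\partial\Omega$ as radial graphs of functions $r_k,r:S^{n-1}\to(0,\infty)$, which are uniformly Lipschitz by convexity and the uniform inradius/diameter bounds, and with $r_k\to r$ uniformly by Hausdorff convergence; a change of variables in the surface integrals combined with dominated convergence yields the claim. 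This step, where convexity is genuinely exploited, is where I expect the greatest technical effort.
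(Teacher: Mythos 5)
The paper does not actually prove this lemma: it is ``recalled'' from \cite{ADR} and \cite[\S 2]{HP}, so there is no in-paper argument to compare against line by line. Your proposal is therefore a genuine self-contained proof attempt, and most of it is sound. The diameter and inradius arguments are correct (the passage from $B(x_\infty,\rho(\Omega)+\varepsilon/2)\subset\overline\Omega$ to inclusion in $\Omega$ via $\Omega=\mathrm{int}(\overline\Omega)$ for convex open sets is exactly the right way to close that step). The treatment of the first summand $\int_{\partial\Omega_k}(u_k-u)(u_k+u)$ via the uniform trace inequality with small parameter, Rellich, and $\varepsilon\to 0$ is correct; your claim that all the $\Omega_k$ lie in a common class $C^{0,1}_M(\mathcal A)$ is precisely what the paper itself invokes elsewhere through \cite[Lemma 3.2.3.2]{grisvard}, so that ingredient is legitimate, though you should cite or prove it rather than assert it.

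The one genuine gap is in the step you yourself flag as delicate: for smooth $v$, the surface integral in the radial parametrization reads
\begin{equation*}
\int_{\partial\Omega_k}v^2=\int_{S^{n-1}}v^2\bigl(x_0+r_k(\theta)\theta\bigr)\,r_k(\theta)^{n-2}\sqrt{r_k(\theta)^2+|\nabla_{S^{n-1}}r_k(\theta)|^2}\,d\theta,
\end{equation*}
and dominated convergence requires pointwise (a.e.) convergence of $\nabla_{S^{n-1}}r_k$, not just of $r_k$. Uniform convergence of uniformly Lipschitz functions does \emph{not} imply convergence of their gradients (take $r+k^{-1}\sin(k\,\cdot)$), so as written the argument does not close. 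It can be repaired using convexity: the gauge (Minkowski) functionals of the $\Omega_k-x_0$ are convex, converge locally uniformly to that of $\Omega-x_0$, and gradients of convex functions converge at every differentiability point of the limit, hence a.e.; this transfers to $\nabla r_k\to\nabla r$ a.e. Alternatively, you could invoke the classical weak convergence of surface area measures of convex bodies under Hausdorff convergence, which is essentially the content of the cited results. Either way, you need to supply the convexity-specific a.e.\ convergence of the area elements before dominated convergence applies.
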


Next we prove the following spectral stability result.

\begin{thm}\label{stability_easy}
Let $\Omega$ be a bounded convex domain in $\mathbb R^n$ and let $\Omega_k$, $k\in\mathbb{N}$, be a sequence of bounded smooth convex domains, with $\Omega\subset\Omega_{k+1}\subset\Omega_k$ for all $k\in\mathbb{N}$,  such that $\lim_{k\to+\infty}d^{\mathcal{H}}(\Omega,\Omega_k)=0$. Let $\lambda_j(k),\, \lambda_j$ denote  the Robin eigenvalues of $\Omega_k,\, \Omega$ respectively. Then
\begin{equation}\label{limh1}
\lim_{k\to \infty}\lambda_j(k)=\lambda_j,
\end{equation}
for all $j\in\mathbb N$. 
Moreover, 
if $\{u_j(k)\}_{j=1}^{\infty}$ is an orthonormal basis of $L^2(\Omega_k)$ of Robin eigenfunctions associated with $\lambda_j(k)$, then   there exists an orthonormal basis $\{u_j\}_{j=1}^\infty$ of $L^2(\Omega)$ of Robin eigenfunctions associated with $\lambda_j$ such that,
possibly passing to a subsequence with respect to $k$, we have
\begin{equation}\label{limh2}
\lim_{k\to\infty}\|u_j(k)-u_j\|_{H^1(\Omega)}=0.
\end{equation}
\end{thm}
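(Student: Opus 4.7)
The plan is to follow the standard scheme for spectral stability under variational convergence of domains, adapting it to the possibly indefinite quadratic form appearing when $\beta<0$. The crucial structural observation is that since $\Omega\subset\Omega_{k+1}\subset\Omega_k$ with $d^{\mathcal{H}}(\Omega_k,\Omega)\to 0$, all the $\Omega_k$ are uniformly contained in a fixed ball and all contain a fixed ball of positive radius; it is then standard for non-degenerate bounded convex bodies that they belong to a common Lipschitz class $C^{0,1}_M(\mathcal{A})$. From this I extract three uniform ingredients used throughout: a uniform trace inequality on all $\Omega_k$ via Lemma~\ref{traceC}; a uniform coercivity constant via Corollary~\ref{coercivity} so that the left-hand side of \eqref{weak_robin2} is coercive in $H^1(\Omega_k)$ independently of $k$; and a family of extension operators $E_k\colon H^1(\Omega_k)\to H^1(\mathbb{R}^n)$ whose norms are bounded independently of $k$.

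For the upper bound $\limsup_k\lambda_j(k)\le\lambda_j$ I would apply the Min-Max Principle on $\Omega_k$ to the $j$-dimensional subspace obtained by extending a quasi-optimal subspace of $H^1(\Omega)$ to $H^1(\mathbb{R}^n)$ and restricting to $\Omega_k$. Since these extensions are fixed elements of $H^1(\mathbb{R}^n)$ and $|\Omega_k\setminus\Omega|\to 0$, the bulk integrals in the Rayleigh quotient computed on $\Omega_k$ converge to those on $\Omega$, while the boundary integrals do so by Lemma~\ref{approximation_lem} applied to the constant sequence given by each test function; finite-dimensionality turns this into uniform convergence over the unit sphere of the test subspace, and the estimate follows.

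To produce the limit eigenfunctions, the above upper bound combined with the uniform coercivity forces $\|u_j(k)\|_{H^1(\Omega_k)}$ to be bounded uniformly in $k$, so $v_j(k):=E_k u_j(k)$ is bounded in $H^1(\mathbb{R}^n)$, and a diagonal subsequence provides $v_j(k)\rightharpoonup v_j$ in $H^1(\mathbb{R}^n)$, $v_j(k)\to v_j$ in $L^2_{\mathrm{loc}}$, and $\lambda_j(k)\to\tilde\lambda_j$ with $\tilde\lambda_j\le\lambda_j$. Setting $u_j:=v_j|_\Omega$, I would pass to the limit in \eqref{weak_robin} on $\Omega_k$ tested against the extension $\tilde\phi\in H^1(\mathbb{R}^n)$ of an arbitrary $\phi\in H^1(\Omega)$: the bulk terms are handled by pairing the weak $H^1$-convergence of $v_j(k)$ with the strong $L^2$-convergences $\chi_{\Omega_k}\tilde\phi\to\chi_\Omega\tilde\phi$ and $\chi_{\Omega_k}\nabla\tilde\phi\to\chi_\Omega\nabla\tilde\phi$; the boundary integral $\int_{\partial\Omega_k}v_j(k)\tilde\phi$ is handled by polarizing Lemma~\ref{approximation_lem} applied to the weakly convergent sequences $v_j(k)\pm\tilde\phi$. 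The orthonormality relations $\int_{\Omega_k}u_i(k)u_j(k)=\delta_{ij}$ pass to the limit via strong $L^2_{\mathrm{loc}}$-convergence and $\chi_{\Omega_k}\to\chi_\Omega$ in $L^1$, yielding an orthonormal system $\{u_j\}$ in $L^2(\Omega)$ of Robin eigenfunctions on $\Omega$ with eigenvalues $\tilde\lambda_j$.

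To match indices and upgrade the convergence, I would apply the Min-Max Principle for $\lambda_j$ to $\mathrm{span}(u_1,\dots,u_j)\subset H^1(\Omega)$: for $v=\sum_{i=1}^j c_iu_i$ the Rayleigh quotient equals $\sum_i c_i^2\tilde\lambda_i/\sum_i c_i^2\le\tilde\lambda_j$, using the $L^2$- and Robin-orthogonality of eigenfunctions associated with distinct eigenvalues together with the imposed $L^2$-orthonormality within each eigenspace; this yields $\lambda_j\le\tilde\lambda_j$, and combined with the upper bound gives \eqref{limh1}. For the strong convergence \eqref{limh2}, the eigenvalue equation yields $\int_{\Omega_k}|\nabla u_j(k)|^2=\lambda_j(k)-\beta\int_{\partial\Omega_k}u_j(k)^2\to\int_\Omega|\nabla u_j|^2$, while $\int_\Omega|\nabla v_j(k)|^2\le\int_{\Omega_k}|\nabla v_j(k)|^2$ and weak lower semicontinuity force $\int_\Omega|\nabla v_j(k)|^2\to\int_\Omega|\nabla u_j|^2$; combined with strong $L^2(\Omega)$-convergence this gives \eqref{limh2}. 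The main obstacle throughout is the indefinite boundary term when $\beta<0$, handled systematically via the uniform trace inequality available on the common Lipschitz class.
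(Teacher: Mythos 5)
Your argument is correct and shares the paper's overall skeleton (Min-Max upper bound via extended test functions, uniform trace/coercivity constants on a common class $C^{0,1}_M(\mathcal A)$, uniformly bounded extensions, weak compactness, limit passage in the weak formulation, orthonormality in the limit), but it departs from the paper in two substantive places. First, for the matching lower bound the paper (second half of Step 1, inequality \eqref{EEE}) plugs the restrictions of $u_1(k),\dots,u_j(k)$ to $\Omega$ directly into the Min-Max principle on $\Omega$, which forces it to prove linear independence of these restrictions for large $k$ and to track several error terms; you instead first identify the weak limits $u_j$ as eigenfunctions with eigenvalues $\tilde\lambda_j=\lim_k\lambda_j(k)$ and then test the Min-Max on $\Omega$ with ${\rm span}(u_1,\dots,u_j)$, where the quadratic form diagonalizes exactly since $Q(u_i,u_l)=\tilde\lambda_i\delta_{il}$ follows from the weak equation and the $L^2$-orthonormality obtained in the limit. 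This is cleaner, at the price of running the limit passage before concluding \eqref{limh1}; you should add the standard remark that, since every subsequence admits a further subsequence along which $\lambda_j(k)\to\lambda_j$, the full sequence of eigenvalues converges, as \eqref{limh1} asserts. Second, and more interestingly, for the strong convergence \eqref{limh2} the paper invokes the uniform $H^2$ bound of Lemma \ref{hessian_smooth} and the compactness of the embedding $H^2(\Omega)\subset H^1(\Omega)$, whereas you deduce convergence of the Dirichlet energies from the eigenvalue identity together with Lemma \ref{approximation_lem}, and upgrade weak to strong convergence by lower semicontinuity of the norm; this makes the stability theorem independent of the Rellich--Pohozaev/Reilly machinery of Section 3 (and of convexity beyond the uniform Lipschitz character), while the paper's route is shorter given that Lemma \ref{hessian_smooth} is needed anyway for the main theorem. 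Both approaches are valid.
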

\begin{proof}
The proof is divided in three steps.

\smallskip

{\bf Step 1.} We fix $j\in \mathbb{N}$ and we prove that $\lambda_j(k)\to\lambda_j$ as $k\to\infty$. Let $v_i$, $i=1,...,j$, be a $L^2(\Omega)$ orthonormal family of Robin eigenfunctions in $\Omega$, with associated eigenvalues $\lambda_i$. 
Let $\tilde v_i\in H^1(\mathbb R^n)$ be some extension of $v_i$ to $\mathbb R^n$, and let $\tilde V_j=\{\sum_{i=1}^ja_i\tilde v_i:\sum_{i=1}^ja_i^2=1, \ a_i\in \mathbb{R} \}$. From Lemma \ref{approximation_lem} we deduce that there exists $\varepsilon(j,k)>0$ such that
$$
1-\varepsilon_1(j,k)\leq\frac{\int_{\partial\Omega_k}\tilde v^2}{\int_{\partial\Omega}v^2}\leq1+\varepsilon_1(j,k)
$$
for all $\tilde v\in\tilde V_j$, 
where $\varepsilon_1(j,k)\to 0$ as $k\to\infty$. We have
\begin{multline}\label{EE}
\lambda_j(k)\leq\max_{\tilde v\in \tilde V_j}\frac{\int_{\Omega_k}|\nabla \tilde v|^2+\beta\int_{\partial\Omega_k}\tilde v^2}{\int_{\Omega_k}\tilde v^2}\leq\max_{\tilde v\in \tilde V_j}\frac{\int_{\Omega}|\nabla  v|^2+\beta(1+{\rm sign}(\beta)\varepsilon_1(j,k))\int_{\partial\Omega}v^2+\int_{\Omega_k\setminus\Omega}|\nabla \tilde v|^2}{\int_{\Omega}v^2}\\
\leq\max_{\tilde v\in \tilde V_j}\frac{\int_{\Omega}|\nabla  v|^2+\beta\int_{\partial\Omega} v^2}{\int_{\Omega} v^2}+\max_{\tilde v\in \tilde V_j}\frac{\int_{\Omega_k\setminus\Omega}|\nabla\tilde v|^2+{\rm sign}(\beta)\varepsilon_1(j,k)\beta\int_{\partial\Omega}v^2}{\int_{\Omega}v^2}\\
\leq \lambda_j+\max_{\tilde v\in \tilde V_j}\left(\int_{\Omega_k\setminus\Omega}|\nabla\tilde v|^2+{\rm sign}(\beta)\varepsilon_1(j,k)\beta\int_{\partial\Omega}v^2\right)\leq\lambda_j+\varepsilon_2(j,k)
\end{multline}
where $\varepsilon_2(j,k)\to 0$ as $k\to\infty$. The last inequality in \eqref{EE} follows from the absolute continuity of the Lebesgue integral combined with the fact that $\tilde V_j$ is finite dimensional.

 Now, let $\{u_j(k)\}_{j=1}^{\infty}$ be an orthonormal basis of $L^2(\Omega_k)$ of Robin eigenfunctions associated with $\lambda_j(k)$. We show that their restrictions to $\Omega$ are linearly independent. It follows by \cite[Lemma 3.2.3.2]{grisvard} that the domains $\Omega$ and $\Omega_k$ belong to the same atlas class $C^{0,1}_M(\mathcal A)$ for all $k$ sufficiently large. Hence, by inequality \eqref{EE}, by Corollary~\ref{coercivity} and by the normalization of $u_j(k)$, it follows that the norm $\|u_j(k)\|_{H^1(\Omega_k)}$ is uniformly bounded with respect to $k$.  We show that $\lim_{k\to\infty}\int_{\Omega_k\setminus\Omega}u_i(k)u_{\ell}(k)=0$ for all $i,\ell=1,...,j$. This implies that for $k$ sufficiently large, $u_j(k)$ are linearly independent in $L^2(\Omega)$.  Indeed, by the H\"{o}lder's inequality, the Sobolev Embedding Theorem and the uniform bound on the $H^1(\Omega_k)$ norms of the eigenfunctions we have 
\begin{multline}\label{estimate_strip}\left|
\int_{\Omega_k\setminus \Omega} u_i(k) u_j(k)\right|\le 
\| u_i(k)\|_{L^2(\Omega_k\setminus \Omega)}\| u_j(k)\|_{L^2(\Omega_k\setminus \Omega)}\\
\le |\Omega_k\setminus\Omega|^{1-2/p}\|  u_i(k)\|_{L^p(\Omega_k\setminus \Omega)}\|  u_j(k)\|_{L^p(\Omega_k\setminus \Omega)}\le C |\Omega_k\setminus\Omega|^{1-2/p}
\end{multline}
for some $p>2$, where $C$ is independent on $k$. From now on we assume directly that the functions $u_i(k)$ are extended to the whole $\mathbb R^n$ with norms in $H^1(\mathbb R^n)$ uniformly bounded, and that (possibly passing to a subsequence with respect to $k$) $u_i(k)$ is weakly convergent in $H^1(\mathbb R^n)$ as $k\to\infty$, and strongly in $L^2(\Omega)$. Let $V_j(k)=\{\sum_{i=1}^ja_iu_i(k):\sum_{i=1}^ja_i^2=1, \ a_i\in\mathbb R\}$. We have
\begin{multline}\label{EEE}
\lambda_j\leq\max_{v\in V_j(k)}\frac{\int_{\Omega}|\nabla v|^2+\beta\int_{\partial\Omega}v^2}{\int_{\Omega}v^2}\leq\max_{v\in V_j(k)}\frac{\int_{\Omega_k}|\nabla v|^2+\beta(1+{\rm sign}(\beta)\varepsilon_3(j,k))\int_{\partial\Omega_k}v^2}{\int_{\Omega}v^2}\\
\leq\max_{v\in V_j(k)}\frac{\int_{\Omega_k}|\nabla v|^2+\beta\int_{\partial\Omega_k}v^2}{\int_{\Omega_k}v^2}\frac{\int_{\Omega_k}v^2}{\int_{\Omega}v^2}+\varepsilon_3(j,k)|\beta|\max_{v\in V_j(k)}\frac{\int_{\partial\Omega_k}v^2}{\int_{\Omega}v^2}
\\
\leq\lambda_j(k)\max_{v\in V_j(k)}\frac{\int_{\Omega_k}v^2}{\int_{\Omega}v^2}+\varepsilon_3(j,k)|\beta|C
\leq\lambda_j(k)(1+{\rm sign}(\lambda_j(k))\varepsilon_4(j,k))+\varepsilon_3(j,k)|\beta|C
\end{multline}
with $\varepsilon_3(j,k),\varepsilon_4(k)\to 0$ as $k\to\infty$ for fixed $j$. In the second inequality we have used
$$
1-\varepsilon_3(j,k)\leq\frac{\int_{\partial\Omega}v^2}{\int_{\partial\Omega_k}v^2}\leq 1+\varepsilon_3(j,k)
$$
for all $v\in V_j(k)$. In the fourth inequality we have used, for the first summand, that 
$$
\frac{\int_{\Omega_k}|\nabla v|^2+\beta\int_{\partial\Omega_k}v^2}{\int_{\Omega_k}v^2}\leq\lambda_j(k)
$$
for all $v\in V_j(k)$; for the second summand we have used that $\int_{\Omega}u_i(k)u_h(k)\to\delta_{ih}$ as $k\to\infty$ for all $i,h=1,...,j$, and that  $\int_{\partial\Omega_k}u_i(k)^2=O(1)$ as $k\to\infty$ by \eqref{traceCineq} for all $i=1,...,j$. In the fifth inequality we have used
$$
1-\varepsilon_4(j,k)\leq\frac{\int_{\Omega_k}v^2}{\int_{\Omega}v^2}\leq 1+\varepsilon_4(j,k).
$$

Inequality \eqref{EEE} combined with \eqref{EE} implies that $\lambda_j(k)\to\lambda_j$ for all $j$ as $k\to\infty$. 

\smallskip

{\bf Step 2.} By Lemma~\ref{hessian_smooth} and the  uniform bounds for the norms in $H^1(\Omega_k)$ of the eigenfunctions (see Step 1),  we have that $\{u_j(k)\}_{k=1}^{\infty}$ is bounded in $H^2(\Omega)$. Up to extracting a subsequence, we find $u_j\in H^2(\Omega)$ such that $u_j(k)\to u_j$ in $H^1(\Omega)$. We now show that $u_j$ is an eigenfunction with eigenvalue $\lambda_j$. Let $\Phi$ be a Lipschitz continuous function defined in $\mathbb R^n$. Then
\begin{equation}\label{weak_lim}
\int_{\Omega_k}\nabla u_j(k)\cdot\nabla\Phi+\beta\int_{\partial\Omega_k}u_j(k)\Phi=\lambda_j(k)\int_{\Omega_k}u_j(k)\Phi.
\end{equation}
We consider the boundary integral in  the left-hand side of \eqref{weak_lim}, and  we write
$$
\int_{\partial\Omega_{k}}u_j(k)\Phi=\int_{\partial\Omega}u_j\Phi+\left(\int_{\partial\Omega_{k}}u_j(k)\Phi-\int_{\partial\Omega}u_j(k)\Phi\right)+\left(\int_{\partial\Omega}u_j(k)\Phi-\int_{\partial\Omega}u_j\Phi\right).
$$
The second term in the right-hand side goes to zero as $k\to\infty$ thanks to Lemma \ref{approximation_lem}, while the third term goes to zero (possibly passing to a subsequence) from the compactness of the trace operator. For the volume integral in left-hand side of \eqref{weak_lim}, we have
\begin{equation}\label{last}
\int_{\Omega_k}\nabla u_j(k)\cdot\nabla\Phi=\int_{\Omega}\nabla u_j\cdot\nabla\Phi+\int_{\Omega_k\setminus\Omega}\nabla u_j(k)\cdot\nabla\Phi+\left(\int_{\Omega}\nabla (u_j(k)-u_j)\cdot\nabla\Phi\right).
\end{equation}
The second term in the right-hand side of \eqref{last} goes to zero as $k\to\infty$ because 
$$
\int_{\Omega_k\setminus\Omega}\nabla u_j(k)\cdot\nabla\Phi\leq\|\nabla u_j(k)\|_{L^2(\Omega_k\setminus\Omega)}\|\nabla \Phi\|_{L^2(\Omega_k\setminus\Omega)}\leq C\|\nabla \Phi\|_{L^2(\Omega_k\setminus\Omega)}
$$
and because $\|\nabla \Phi\|_{L^2(\Omega_k\setminus\Omega)}$ goes to zero as before by the absolute continuity of the Lebesgue integral. The third term in \eqref{last} goes to zero since $u_j(k)\to u_j$ in $H^1(\Omega)$. In the same way one can see that the volume integral in the right-hand side of \eqref{weak_lim} converges to $\int_{\Omega}u_j\Phi$ as $k\to\infty$. In conclusion, 
\begin{equation}\label{roma}
    \int_{\Omega}\nabla u_j\cdot \nabla \Phi+\beta\int_{\partial\Omega}u_j\Phi =\lambda_j \int_{\Omega}u_j\Phi,
\end{equation}
and 
 \eqref{limh2} holds. Since \eqref{roma} holds for all Lipschitz function $\Phi$ defined in $\mathbb R^n$, by a density argument we conclude that \eqref{roma} holds for all $\Phi\in H^1(\Omega)$, hence $u_j$ is a Robin eigenfunction in $\Omega$.

\smallskip

{\bf Step 3.} 
We prove that $\{u_j\}_{j=1}^{\infty}$ is an orthonormal basis of $L^2(\Omega)$. This follows simply by passing to the limit in the equality 
$
\int_{\Omega_k} u_i(k)u_j(k)=\delta_{ij}
$
as $k\to \infty$
in order to get 
$
\int_{\Omega} u_i u_j=\delta_{ij}.
$
In fact, writing 
$$\int_{\Omega_k}u_i(k) u_j(k)=\int_{ \Omega}u_i(k) u_j(k)+\int_{\Omega_k\setminus \Omega}u_i(k)u_j(k)
$$
we have that the first integral in  the right-hand side of the previous equality converges to 
$
\int_{\Omega} u_i u_j
$
while the second integral converges to zero by \eqref{estimate_strip}.

\end{proof}

We are ready to prove Theorem \ref{mainthm}. 

\begin{proof}[Proof of Theorem \ref{mainthm}]
By \cite[Lemma 3.2.1.1]{grisvard} there exists a sequence of bounded smooth convex domains $\{\Omega_k\}_{k=1}^{\infty}$ such that $\Omega\subset\Omega_{k+1}\subset\Omega_k$ for all $k\in\mathbb{N}$, and such that $\lim_{k\to\infty}d^{\mathcal{H}}(\Omega_k,\Omega)=0$. Let $\lambda_j(k)$ denote  the Robin eigenvalues of $\Omega_k$ and let $\{u_j(k)\}_{j=1}^{\infty}$ be an orthonormal basis of $L^2(\Omega_k)$ of corresponding eigenfunctions. By Theorem \ref{stability_easy},  $\lambda_j(k)\to\lambda_j$ as $k\to\infty$, where $\lambda_j$ are the Robin eigenvalues on $\Omega$, and there exists an orthonormal basis $\{u_j\}_{j=1}^{\infty}$ of $L^2(\Omega)$ of  eigenfunctions associated with the $\lambda_j$'s such that \eqref{limh2} holds, possibly passing to a subsequence with respect to $k$.

 Consider any domain $\omega$ with $\overline\omega \subset \Omega$. Now, we have $-\Delta(u_j(k)-u_j)=\lambda_j(k)u_j(k)-\lambda_ju_j$ and  by \eqref{limh2} it follows that $\|\lambda_j(k)u_j(k)-\lambda_ju_j\|_{H^1(\Omega)}\to 0$ as $k\to\infty$. It follows by elliptic regularity (see e.g., \cite[Theorem 8.10]{GT}) that $\lim_{k\to\infty}\|u_j(k)-u_j\|_{H^2(\omega)}=0$. In particular, by \eqref{hessian_smooth_formula3} if $\beta\geq 0$ we have
\begin{equation}\label{est_positive}
\int_{\omega}|D^2u_j|^2=\lim_{k\to\infty}\int_{\omega}|D^2u_j(k)|^2\leq \lim_{k\to\infty}\lambda_j(k)^2=\lambda_j^2.
\end{equation}
If $\beta<0$, by \eqref{hessian_smooth_formula2} we have that
\begin{multline}\label{est_negative}
\int_{\omega}|D^2u_j|^2=\lim_{k\to\infty}\int_{\omega}|D^2u_j(k)|^2\leq \lim_{k\to\infty}\left(\lambda_j(k)^2- 2\beta C\left(D(k),\rho(k),\beta,\frac{\varepsilon\lambda_j(k)+C(\varepsilon)}{1+\varepsilon\beta}\right)\right)\\
=\lambda_j^2- 2\beta C\left(D,\rho,\beta,\frac{\varepsilon\lambda_j+C(\varepsilon)}{1+\varepsilon\beta}\right),
\end{multline}
where $\varepsilon$ is any fixed constant satisfying $\varepsilon\in(0,-1/\beta)$. Here $D(k),\rho(k)$ are the diameter and inradius of $\Omega_k$, while $D,\rho$ are the diameter and inradius of $\Omega$.

 Consider now a sequence of domains $\omega_{\ell}\subset\Omega$ such that $\overline\omega_{\ell}\subset\Omega$, $\omega_{\ell}\subset \omega_{\ell+1}$, $\cup_{\ell=1}^{\infty}\omega_{\ell}=\Omega$.  Then, by considering \eqref{est_positive} and \eqref{est_negative} with $\omega$ replaced by $\omega_{\ell}$ and passing to the limit as $\ell\to\infty$, we get that
 \begin{align}
 & \int_{\Omega}|D^2u_j|^2\leq \lambda_j,  & {\rm if\ }\beta\geq 0,\label{convexB+}\\
& \int_{\Omega}|D^2u_j|^2\leq\lambda_j^2- 2\beta C\left(D,\rho,\beta,\frac{\varepsilon\lambda_j+C(\varepsilon)}{1+\varepsilon\beta}\right), & {\rm if\ }\beta< 0,
\end{align}
hence $u_j\in H^2(\Omega)$. Since the traces of $\{u_j\}_{j=1}^{\infty}$ form  a complete system  in $L^2(\Omega)$, all  other eigenfunctions are  linear combinations of a finite number of those eigenfunctions  hence they belong to $H^2(\Omega)$.
\end{proof}

\begin{rem}
We remark that formula \eqref{convexB+} is the  estimate found in \cite[\S3]{grisvard}.
\end{rem}





\section*{Acknowledgments}
The authors are very thankful to Professor David Krej\v ci\v r\'ik for encouraging the use of the method developed in \cite{lampro_steklov} for the case of the Robin problem with negative parameter. The second author is grateful to the Dipartimento di Scienze di Base e Applicate per l'Ingegneria of Sapienza University of Rome for the kind hospitality during the preparation of the manuscript. The first author is a member of the Gruppo Nazionale per l'Analisi  Matematica, la Probabilit\`{a} e le loro Applicazioni (GNAMPA) of the Istituto Nazionale di Alta Matematica (INdAM). The second author acknowledges the support of the INdAM GNSAGA. The authors aknowledge financial support from the project ``Perturbation problems and asymptotics for elliptic differential equations: variational and potential theoretic methods'' funded by the European Union – Next Generation EU and by MUR-PRIN-2022SENJZ3. 

\medskip

The authors declare no conflicts of interest. No data are associated with this article. The authors have no relevant financial or non-financial interests to disclose. 

\bibliography{bibliography}{}
\bibliographystyle{abbrv}
\end{document}